\newcommand{\minimize}{\textrm{minimize}}
\newcommand{\R}{\mathbb{R}}
\newcommand{\In}{\mathrm{I}_n}
\newcommand{\Ip}{\mathrm{I}_p}
\newcommand{\calN}{\mathcal{N}}
\newcommand{\D}{\mathrm{D}}
\newcommand{\p}{_{\perp}}
\newcommand{\transpose}{^\top}
\newcommand{\T}{^\top}
\newcommand{\St}{\mathrm{St}}
\newcommand{\StX}{\mathrm{St}_{X\T X}}
\newcommand{\Stnp}{\mathrm{St}(n,p)}
\newcommand{\grad}{\mathrm{grad}}
\newcommand{\Proj}{\mathrm{Proj}}
\newcommand{\trace}{\mathrm{trace}}
\newcommand{\rmPhi}{\mathrm{\Phi}}
\newcommand{\Nrm}{\mathrm{N}}
\newcommand{\Trm}{\mathrm{T}}
\newcommand{\rmT}{\mathrm{T}}
\newcommand{\Sym}{\mathrm{Sym}}
\newcommand{\sym}{\operatorname{sym}}
\newcommand{\Skew}{\mathrm{Skew}}
\renewcommand{\skew}{\operatorname{skew}}
\newcommand{\Rpp}{\mathbb{R}^{p\times p}}
\newcommand{\Rnnp}{\mathbb{R}^{n\times (n-p)}}
\newcommand{\Rnpp}{\mathbb{R}^{(n-p)\times p}}
\newcommand{\rank}{\operatorname{rank}}
\newcommand{\spann}{\operatorname{span}}
\newcommand{\inner}[2]{\left\langle{#1},{#2}\right\rangle}
\newcommand{\I}{\mathrm{I}}
\newcommand{\Rnp}{{\mathbb{R}^{n \times p}}}
\newcommand{\Rnpstar}{{\mathbb{R}_{*}^{n \times p}}}
\newcommand{\Tr}{\operatorname{tr}}
\newcommand{\inv}{^{-1}}
\newcommand{\onehalf}{\frac{1}{2}}
\newcommand{\norm}[1]{\left\|#1\right\|}
\newcommand{\gE}{g^{\mathrm{E}}}
\newcommand{\gC}{g^{\mathrm{c}}}
\newcommand{\gbeta}{g^{\mathrm{\beta}}}
\newcommand{\ProjbetaX}{\mathrm{Proj}_{X,\beta}}
\newcommand{\normalbeta}{\mathrm{N}^\beta}
\newcommand{\nablaE}{\nabla_\mathrm{E}}
\newcommand{\nablabeta}{\nabla_{\mathrm{\beta}}}
\newcommand{\gradC}{\grad_{\mathrm{c}}}
\newcommand{\gradbeta}{\grad_{\mathrm{\beta}}}
\begin{document}
\title{Geometric design of the tangent term in landing algorithms for orthogonality constraints}
\titlerunning{Landing method using $\beta$-metric}
%
\author{Florentin Goyens\inst{1} \and P.-A. Absil\inst{1} \and
Florian Feppon\inst{2}}
\authorrunning{Goyens et al.}
%
\institute{ICTEAM Institute, UCLouvain, Louvain-la-Neuve, Belgium\\
 \and
NUMA Unit, KULeuven, Leuven, Belgium}
\maketitle              
\begin{abstract}
We propose a family a metrics over the set of full-rank $n\times p$ real matrices, and apply them to the landing framework for optimization under orthogonality constraints. The family of metrics we propose is a natural extension of the $\beta$-metric, defined on the Stiefel manifold.
\keywords{Constrained optimization  \and Penalty method \and Stiefel manifold.}
\end{abstract}

We consider the constrained optimization problem     
\begin{equation}\label{eq:P}\tag{P}
\begin{aligned}
& \underset{X\in \R^{n\times p}}{\minimize}
& & f(X)
& \text{subject to}
&&& X\T X = \Ip,
\end{aligned}
\end{equation}
    where $p\leq n$ and $f\colon \Rnp \to \R$ is a smooth and possibly nonconvex function. The feasible set, called the Stiefel manifold, consists of rectangular matrices with orthonormal columns
\[
\St(n,p):=\{ X\in\R^{n\times p }\,|\, X\T X= \I_p \}.
\]   
Letting
\begin{align}\label{eq:h}
h\colon \Rnp \to \Sym(p)\colon h(X) = X\transpose X - \I_p,
\end{align}
where $\Sym(p)$ is the set of $p\times p$ symmetric matrices, the feasible set is $h\inv(0)$ and the infeasibility is measured by
\begin{align}\label{eq:N}
	\calN(X) = \onehalf \norm{h(X)}^2.
\end{align}

Algorithms for~\eqref{eq:P} fall into two broad categories: feasible and infeasible methods. Infeasible methods---typically penalty methods---appeared around the 1970's~\cite{bertsekas2014constrained,polyak1971convergence}. They consist in solving a succession of unconstrained problems, whose solutions converge towards the solution of the constrained problem under some conditions. Classical penalty methods include quadratic penalty methods and augmented Lagrangian methods. Nonsmooth penalty terms are also possible. Around the year 2000, Riemannian optimization methods blossomed~\cite{absil2008}. This new framework introduced feasible algorithms for optimization problems on smooth Riemannian manifolds. Riemannian methods generalize unconstrained optimization methods---such as gradient descent---and use differential geometry tools to keep all the iterates on the feasible set. 

Recent work~\cite{ablin2024infeasible,pmlr-v235-vary24a} recently introduced an infeasible method that exploits the local geometry of the constraint function $h$. Each step of this scheme, called a landing scheme, is the sum of two terms: a normal term that decreases infeasibility, and a tangent term that decreases the cost function $f$. A natural choice for the tangent term is the projected gradient of $f$ onto the tangent space of the current level set of $h$. Incidentally, this new framework 
turns out to be very close to the 
concept of ``null space gradient flow'' in the shape and topology optimization community developed in \cite{feppon:hal-01972915}, which 
itself borrows from diffential equation approaches to equality constrained nonlinear programming \cite{tanabe_geometric_1980,yamashita_differential_1980}.

In this work, we propose a family of (Riemannian) metrics for the ambient space $\Rnpstar$ (the set of full-rank matrices of size $n\times p$), and show that this leads to a family of tangent terms in the landing framework. In Section~\ref{sec:landing}, we review and extend the landing framework for~\eqref{eq:P}, introducing a dependence of the tangent term on the choice of a metric in the ambient space. In Section~\ref{sec:beta_stiefel}, we review the family of $\beta$-metrics on the Stiefel manifold. In Section~\ref{sec:beta_metric}, we present our main contribution: computing the tangent term of the landing in the $\beta$-metric. 

\paragraph{Notations:}
$\mathrm{D}h(Y)$ denotes the derivative of $h$ at $Y$ and $\mathrm{Sym}(p)$, resp.\ $\mathrm{Skew}(p)$, stands for the set of $p\times p$ symmetric, resp.\ skew-symmetric, matrices. Additionally, $\sym(A) = (A+A\T)/2$, $\skew(A) = (A-A\T)/2$, and $\inner{A}{B} = \trace\left(A\T B\right)$.
\section{A metric-based landing framework}
\label{sec:landing}

\subsection{Layered manifolds and metric}
In this section, we review and extend the landing framework for~\eqref{eq:P}, first introduced in~\cite{ablin2022fast} and extended to the Stiefel manifold in~\cite{ablin2024infeasible}. The landing method is based on the central observation that for $X\in \Rnpstar$, the set
\[
\St_{X\T X}:=\{ Y\in\R^{n\times p }\,|\, Y\T Y=X\T X \}
\]  
is a smooth manifold which we term a \emph{layer} manifold~\cite[Prop. 1.2]{goyens2024computing}. It is a level set of the function $h$, i.e., $\St_{X\T X}= \{Y\in \Rnp|h(Y)=h(X)\}$. For $X\in \Rnpstar$, we have $\rank(\D h(Y)) = \dim(\Sym(p))$ for all $Y\in \St_{X\T X}$, and therefore $\St_{X\T X}$ is a smooth manifold. For $X\in \Rnpstar$, the matrix $X\T X$ is symmetric and positive definite. This makes it possible to consider the linear map
\begin{align}
	\rmPhi_{X\T X}\colon \Rnp \to \Rnp \colon Y\mapsto Y(X\T X)^{1/2}.
\end{align}
The map $\rmPhi_{X\T X}$ is a diffeomorphism from $\Stnp$ to $\StX$.

 It is useful to characterize the tangent space to $\StX$ at $X$. This set does not depend on the metric.
\begin{proposition}
    The tangent space of $\StX$ at $X$ is the set    
        \begin{align} \label{eq:tgtspace1}
        \Trm_X \StX  & = \{ \xi\in\R^{n\times p}\,|\, \xi\T X+X\T \xi = 0  \} \\    
        & = \{ X(X\T X)^{-1}\Omega + \Delta\,|\, \Omega\in \Skew(p),\, \Delta
        \in \R^{n\times p} \text{ with } \Delta\T X=0\}  \label{eq:tgtspace2}\\  
    &= \{W X\,|\, W\in\Skew(n)\}, \label{eq:3form_tangent}
        \end{align}
        with dimension $np - p(p+1)/2$.
\end{proposition}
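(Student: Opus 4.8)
The plan is to establish the three descriptions of the tangent space in sequence, then count dimensions. The starting point is that $\St_{X\T X}$ is a level set of $h$, so its tangent space at $X$ is $\ker \D h(X)$. Since $h(Y) = Y\T Y - \I_p$, the derivative is $\D h(X)[\xi] = \xi\T X + X\T\xi$, which lands in $\Sym(p)$; this immediately gives description~\eqref{eq:tgtspace1}. The constant-rank condition $\rank(\D h(Y)) = \dim(\Sym(p)) = p(p+1)/2$ noted in the text confirms $\D h(X)$ is onto $\Sym(p)$, so the kernel has dimension $np - p(p+1)/2$, which settles the dimension claim once any one description is verified to match.

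For the equality of~\eqref{eq:tgtspace1} and~\eqref{eq:tgtspace2}, I would argue by double inclusion, but it is cleaner to observe that~\eqref{eq:tgtspace2} is a direct sum decomposition: any $\xi \in \Rnp$ splits uniquely as $\xi = X(X\T X)^{-1}(X\T\xi) + (\xi - X(X\T X)^{-1}X\T\xi)$, where the second summand $\Delta$ satisfies $\Delta\T X = 0$ and the first has the form $X(X\T X)^{-1}M$ with $M = X\T\xi$. Plugging this into the condition $\xi\T X + X\T\xi = 0$: the $\Delta$-part contributes nothing to $\sym(X\T\xi)$, and the first part contributes $\sym(M)$ (using that $(X\T X)^{-1}$ is symmetric, a short computation gives $X\T\xi = M$ exactly, so the constraint reads $M + M\T = 0$, i.e. $M \in \Skew(p)$). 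Hence~\eqref{eq:tgtspace1} $=$~\eqref{eq:tgtspace2}, and as a cross-check the dimension of~\eqref{eq:tgtspace2} is $\dim\Skew(p) + \dim\{\Delta : \Delta\T X = 0\} = p(p-1)/2 + (n-p)p = np - p(p+1)/2$, consistent with the kernel count.

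For~\eqref{eq:3form_tangent}, the inclusion $\{WX : W \in \Skew(n)\} \subseteq$~\eqref{eq:tgtspace1} is immediate since $(WX)\T X + X\T(WX) = X\T(W\T + W)X = 0$. The reverse inclusion is the part needing an argument: given $\xi$ with $\xi\T X + X\T\xi = 0$, I must exhibit a skew-symmetric $W$ with $WX = \xi$. Writing $\xi$ in the form~\eqref{eq:tgtspace2} as $X(X\T X)^{-1}\Omega + \Delta$, a natural candidate is $W = X(X\T X)^{-1}\Omega(X\T X)^{-1}X\T - X(X\T X)^{-1}\Delta\T + \Delta(X\T X)^{-1}X\T$ (or a similar expression), and one checks $W\T = -W$ using $\Omega\T = -\Omega$ and $\Delta\T X = 0$, and that $WX = \xi$ using $X\T X(X\T X)^{-1} = \I_p$ and $\Delta\T X = 0$. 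Equivalently, and perhaps more transparently, one can complete $X(X\T X)^{-1/2}$ to an orthogonal basis of $\Rn$ via some $X_\perp \in \R^{n\times(n-p)}$ with $X_\perp\T X = 0$, write $\xi$ in coordinates against $[X(X\T X)^{-1/2}\ \ X_\perp]$, and read off $W$ block by block.

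**The main obstacle** I anticipate is the reverse inclusion in~\eqref{eq:3form_tangent}: producing the skew-symmetric lift $W$ explicitly and verifying $WX = \xi$ is the one genuinely constructive step, and care is needed with the $(X\T X)^{\pm 1/2}$ factors since $X$ is not assumed to have orthonormal columns here (only full rank). Everything else — the kernel description, the direct-sum splitting, and the dimension count — is routine linear algebra once $\D h(X)$ is identified and its surjectivity onto $\Sym(p)$ is invoked from the preceding discussion.
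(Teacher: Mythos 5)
Your proposal is correct, but for the one part of the statement where the paper actually does work---the characterization \eqref{eq:3form_tangent}---you take a genuinely different route. The paper handles \eqref{eq:tgtspace1}--\eqref{eq:tgtspace2} by citation and then proves \eqref{eq:3form_tangent} non-constructively: after checking the easy inclusion $\{WX \,|\, W\in\Skew(n)\}\subseteq \Trm_X\StX$, it shows the two subspaces have equal dimension $np-p(p+1)/2$ by conjugating with an orthogonal $P$ such that $P\T X=\begin{bmatrix} R & 0\end{bmatrix}\T$ with $R$ invertible and counting the free blocks of $B=P\T WP$. You instead prove the reverse inclusion directly by exhibiting a skew-symmetric lift, and your candidate $W = X(X\T X)\inv\Omega(X\T X)\inv X\T - X(X\T X)\inv\Delta\T + \Delta(X\T X)\inv X\T$ is in fact exactly right despite your hedging: $W\T=-W$ follows from $\Omega\T=-\Omega$ and symmetry of $(X\T X)\inv$, and $WX = X(X\T X)\inv\Omega + \Delta = \xi$ follows from $\Delta\T X=0$. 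Your explicit construction buys a formula for the lift (useful if one ever needs $W$ computationally) and avoids invoking the dimension of $\Trm_X\StX$ at that stage; the paper's dimension count avoids having to guess a formula and is arguably shorter, at the cost of an orthogonal change of basis and of yielding no explicit $W$. Your derivations of \eqref{eq:tgtspace1} as $\ker \D h(X)$ and of \eqref{eq:tgtspace2} via the oblique splitting $\xi = X(X\T X)\inv X\T\xi + (\I_n - X(X\T X)\inv X\T)\xi$ are also correct and self-contained, whereas the paper defers these to the literature.
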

\begin{proof}
    For~\eqref{eq:tgtspace1}-\eqref{eq:tgtspace2}, see, e.g., \cite{gao_optimization_2022}. Formula~\eqref{eq:3form_tangent} appears in~\cite{gao_optimization_2022} with a minor error in the proof which we fix here (the matrix $R$ below might not be the identity). First, note that $WX\in \Trm_X \StX$ for all $W\in \Skew(n)$. Indeed, $X\T W\T X + X\T WX = 0$. To conclude, we show that 
    \[\dim \{W X\,|\, W\in\Skew(n)\} = \dim \Trm_X \StX = np - p(p+1)/2.\] 
    Let $X\in \Rnpstar$, there exists an orthogonal matrix $P\in \R^{n\times n}$ such that $P\T X = \begin{bmatrix}
 R & 0	
 \end{bmatrix}\T$ where $R\in \R^{p\times p}$ is invertible. Let \[B := P\T W P = \begin{bmatrix}
 	B_{11} & B_{12}\\
 	B_{21} & B_{22}
 \end{bmatrix}.\]
  It holds that 
 \begin{align*}
 	\dim \{W X\,|\, W\in \Skew(n)\} &= \dim \{P\T W PP\T X\,|\, W\in \Skew(n)\}\\
 	&= \dim \left\{ B \begin{bmatrix}
 R \\
  0	
 \end{bmatrix}  \,|\, B\in \Skew(n)\right\} \\
 &= \dim \left\{  \begin{bmatrix}
 B_{11}R \\
  B_{21}R	
 \end{bmatrix}  \,|\, B_{11}\in \Skew(p),~B_{21} \in \R^{(n-p)\times p}\right\} \\ \nonumber
 &= \frac{1}{2}p(p-1) + (n-p)p = np - \dfrac{1}{2}p(p+1). \qedhere
 \end{align*}
\end{proof}

We use the notation
\begin{align}\label{eq:frobenius_metric}
\inner{\cdot}{\cdot} \colon \Rnp\times \Rnp \to \R \colon	\inner{X}{Y} &= 	\trace	\left(X\transpose Y\right)
\end{align}
to denote the Frobenius inner product. However, our aim is to generalize the landing algorithm and show that one can define a landing algorithm for any metric in the ambient space. Let $g$ denote a Riemannian metric on $\Rnpstar$. The metric $g$ allows to define several objects useful for optimization. First, the metric defines a \emph{normal space} to the manifold $\St_{X\T X}$ at $X$,
	\begin{align}
		\Nrm^g_X \St_{X\transpose X}:= \left\lbrace U\in \Rnp \colon g(U,\xi) = 0 \text{ for all } \xi\in \Trm_X \StX\right \rbrace.
	\end{align}
	The \emph{unconstrained Riemannian gradient} of $f$, written $\nabla_g f$, also depends on the metric $g$. Given $X\in \Rnpstar$, $\nabla_g f(X)$ is the unique element of $\rmT_X \Rnpstar \simeq \Rnp$ that satisfies, for all $\xi \in \rmT_X \Rnpstar$,
	\begin{align}\label{eq:unconstrained_gradient}
		\mathrm{D}f(X)[\xi] = g(\xi, \nabla_g f(X)).
	\end{align}
	The Riemannian metric induces a \emph{constrained Riemannian gradient}, written $\mathrm{grad}_g f$. Given $X\in \Rnpstar$, $\mathrm{grad}_g f(X)$ is the unique vector in $\rmT_X\StX$ that satisfies, for all $\xi\in \Trm_X \StX$, 
	\begin{align}\label{eq:riemannian_gradient}
		\D f(X)[\xi] = g(\xi, \grad_g f(X)).
		\end{align}
  The orthogonal projection onto $\Trm_X \StX$ with respect to the metric $g$, written $\Proj_{X,g} \colon \Rnp \to \Trm_X \StX$, is characterized by
		\begin{align}\label{eq:projection}
			g(\xi, Z-\Proj_{X,g}(Z))=0,
		\end{align}
		for all $\xi \in \Trm_X \StX$ and all $Z\in \Rnp$ (see, e.g.,~\cite[(3.37)]{absil2008}). The constrained and unconstrained Riemannian gradients of $f$ are related by 
		$$
\grad_g f(X) = \Proj_{X,g}(\nabla_g f(X)).$$
 
 \subsection{The landing algorithm}
 The proposed landing algorithm for a metric $g$ is presented in Algorithm~\ref{algo:landing_framework}. It makes a step along the weighted sum of two terms. 
 The first term is the constrained Riemannian gradient step, $-\grad_g f(X_k)$, which belongs to the tangent space to $\StX$ at $X_k$. It decreases the cost function in a direction that
 preserves the constraint function $h$ at the first order, in the sense that $$\D h(X)[\grad_g f(X)]=0.$$ The second term decreases infeasibility. As in~\cite{ablin2024infeasible,pmlr-v235-vary24a}, we set this term to be the negative Euclidean gradient of the infeasibility function $\mathcal{N}$ (eq.\,\eqref{eq:N}), given by
 \begin{align}
    \label{eqn:3lio6}
-	 \nabla \calN(X) &=- X(X\T X - \I_p),
\end{align}  
that is the gradient defined with respect to the standard Frobenius metric. Note that the metric chosen to compute the gradient of $\calN$ can be 
selected independently of $g$, since the only important property is that this term leads to a descent of $\mathcal{N}$.

\begin{algorithm}
\caption{Landing framework}\label{algo:landing_framework}
\begin{algorithmic}[1]
\State \textbf{Given:} Metric $g$ on $\Rnpstar$, tolerance $\varepsilon>0$, $X_0\in \Rnpstar$, parameter $\omega>0$.
\While{$ \norm{\grad_g f(X_k)} + \norm{h(X_k)} >\varepsilon$}
\State Set stepsize $\eta_k>0$
\State $X_{k+1} = X_k - \eta_k \left( \grad_g f(X_k) + \omega \nabla \calN(X_k)\right)$
\State $k \leftarrow k +1$
\EndWhile
\end{algorithmic}
\end{algorithm}

\section{Metrics on the feasible set $\Stnp$}
\label{sec:beta_stiefel}
The proposed landing algorithm requires to choose a metric $g$ on $\Rnpstar$	, which defines the tangent term
$\grad_g f(X)$. Before we propose a metric for the ambient space $\Rnpstar$, we
discuss possible choices of metric on the feasible set $\St(n,p)$.

To turn the set $\Stnp$ into a Riemannian manifold, we have to endow each tangent space with an inner product. The two mainstream choices are called the Euclidean and canonical metrics on $\Stnp$; they are subsumed by a family of metrics called the $\beta$-metrics~\cite{mataigne2024efficient}, originally introduced in~\cite{huper2021lagrangian} with a different notation. The $\beta$-metric is defined for $X\in \Stnp$ and $\beta>0$ as
\begin{align}\label{eq:betametric_stiefel}
	\gbeta_X\colon \Trm_X \Stnp \times \Trm_X \Stnp \to \R \colon (\xi,\zeta) \mapsto \inner{\xi}{\left(\I_n-(1-\beta)X X\transpose\right)\zeta }.
\end{align}
Given $X\p \in \Rnnp$, a basis for the orthogonal complement of $\spann(X)$, satisfying 
\[
	X\p \T X\p = \I_{n-p} \qquad \textrm{ and }\qquad  X\transpose X\p = 0,
\]
we can uniquely decompose any $\xi,\zeta \in \Trm_X \Stnp$ into
\begin{align}
	\xi &= X \Omega_\xi + X\p B_\xi\\
	\zeta &= X \Omega_\zeta + X\p B_\zeta,
\end{align} 
with $\Omega_\xi, \Omega_\zeta \in \Skew(p)$ and $B_\xi,B_\zeta \in \R^{(n-p)\times p}$. Using this notation, the $\beta$-metric is given by
\begin{align}
	\gbeta_X(\xi,\zeta) &= \beta\inner{\Omega_\xi}{\Omega_\zeta} +  \inner{B_\xi}{B_\zeta},~~~ \textrm{ for } X\in \Stnp.
\end{align}
The Euclidean metric corresponds to $\beta=1$, which we write \[\gE(\xi,\zeta) =
\inner{\xi}{\zeta}.\]
 For $\beta=\frac{1}{2}$, we obtain the canonical metric
\cite{edelman1998geometry}, written \[\gC_X(\xi,\zeta) = \inner{\xi}{\left(\In-\frac{1}{2}X X\transpose\right)\zeta }.\]

\section{Extending the $\beta$-metric to the ambient space}
\label{sec:beta_metric}

In this section, we extend the $\beta$-metric~\eqref{eq:betametric_stiefel} from the feasible set $\Stnp$ to the ambient space $\Rnpstar$. We follow the approach proposed in~\cite{gao_optimization_2022} for the extension of the canonical metric. First, extend~\eqref{eq:betametric_stiefel} to $\xi, \zeta \in \mathbb{R}^{n\times p}$, and observe that this remains a well-defined inner product. Next, for $X\in\mathbb{R}_*^{n\times p}$, consider the pullback of the extended~\eqref{eq:betametric_stiefel} through $\rmPhi_{X^\top X}^{-1}$: 
\begin{align}\label{eq:beta_metric}
	\gbeta_X(\xi, \zeta)&:= \gbeta_{\rmPhi\inv_{X\T X}(X)} \left(\D \rmPhi\inv_{X\T X}(X)[\xi],\D\rmPhi\inv_{X\T X}(X)[\zeta]\right)\\
	 &= \inner{\xi}{(\In - (1-\beta)X (X\T X)\inv X\T) \zeta (X\T X)\inv},
\end{align}
for all $\xi, \zeta \in \rmT_X \Rnpstar \simeq \Rnpstar$. This defines a Riemannian metric on $\mathbb{R}_*^{n\times p}$.

 This metric depends on the reference point $X$. Therefore, there is no value of $\beta$ for which this is equivalent to the Frobenius metric $\gE(\xi,\zeta)= \trace(\xi\T \zeta)$. For $\beta = \frac{1}{2}$, we recover the extension of the canonical metric proposed in~\cite{gao_optimization_2022} for $X\in\Rnpstar$: 
 \begin{equation}
\gC_X(\xi,\zeta):= \langle \xi,(\In-\frac{1}{2}X(X\T X)\inv X\T)\zeta (X\T X)\inv
\rangle, \qquad \xi,\zeta\in\R^{n\times p}.
\end{equation}
The following proposition is a convenient formula for the $\beta$-metric.
\begin{proposition}
Let $X\in \Rnpstar$ and $X\p \in \R^{n\times (n-p)}$ be such that
 $\begin{bmatrix}
		X & X\p
\end{bmatrix}$
 is invertible and $X\T X\p = 0$. Let $\eta,\xi \in \Rnp$, which are decomposed as
\begin{align}
	\eta &= XA_\eta + X\p B_\eta & \text{ and } && \xi = XA_\xi + X\p B_\xi,
\end{align}
with $A_\eta,A_\xi \in \Rpp$ and $B_\eta,B_\xi \in \R^{(n-p)\times p}$. We have
\begin{align}
	g^\beta_X(\eta,\xi) = \beta \Tr \left( A_\eta\T X\T X A_\xi (X\T X)^{-1}\right) + \Tr \left( B_\eta\T X\T\p X\p B_\xi (X\T X)^{-1}\right). 
\end{align}
\end{proposition}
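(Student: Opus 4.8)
The plan is to work directly from the closed-form expression of the metric in~\eqref{eq:beta_metric}, namely
\[
g^\beta_X(\eta,\xi) = \inner{\eta}{\bigl(\In - (1-\beta)X(X\T X)\inv X\T\bigr)\,\xi\,(X\T X)\inv},
\]
and simply substitute the decompositions $\eta = XA_\eta + X\p B_\eta$ and $\xi = XA_\xi + X\p B_\xi$, exploiting the orthogonality relation $X\T X\p = 0$. The key observation is that the symmetric matrix $P := \In - (1-\beta)X(X\T X)\inv X\T$ acts very simply on the two pieces of the decomposition: since $X(X\T X)\inv X\T$ is the orthogonal projector onto $\range(X)$, we get $PX = \beta X$ and $P X\p = X\p$ (the latter because the columns of $X\p$ are orthogonal to those of $X$). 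Hence $P\xi = \beta X A_\xi + X\p B_\xi$.

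Next I would use $\inner{A}{B} = \Tr(A\T B)$ to write $g^\beta_X(\eta,\xi) = \Tr\bigl(\eta\T P\xi\,(X\T X)\inv\bigr)$, and expand $\eta\T P\xi = (A_\eta\T X\T + B_\eta\T X\p\T)(\beta X A_\xi + X\p B_\xi)$. The two cross terms vanish because $X\T X\p = 0$ and $X\p\T X = 0$, leaving
\[
\eta\T P\xi = \beta\, A_\eta\T X\T X A_\xi + B_\eta\T X\p\T X\p B_\xi.
\]
Multiplying on the right by $(X\T X)\inv$, taking the trace, and using linearity of the trace then yields exactly the claimed formula
\[
g^\beta_X(\eta,\xi) = \beta\,\Tr\!\left(A_\eta\T X\T X A_\xi (X\T X)\inv\right) + \Tr\!\left(B_\eta\T X\p\T X\p B_\xi (X\T X)\inv\right).
\]

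There is no real obstacle here: the computation is routine linear algebra, and the only thing one must be careful about is the (mild) point that $[X\ X\p]$ is merely assumed invertible with $X\T X\p = 0$, so $X\p$ need not have orthonormal columns and the factor $X\p\T X\p$ genuinely appears (it does not simplify to the identity); this is precisely why the statement keeps $X\p\T X\p$ explicitly rather than absorbing it. I would also remark that one may equivalently derive the formula from the pullback definition in~\eqref{eq:beta_metric} together with Proposition on the tangent space, but the direct substitution above is the shortest route.
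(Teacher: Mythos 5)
Your computation is correct: the identities $PX=\beta X$ and $PX\p = X\p$ (with $P=\In-(1-\beta)X(X\T X)\inv X\T$) follow immediately from $X\T X\p=0$, the cross terms vanish, and the trace expansion gives exactly the stated formula. The paper states this proposition without proof, so there is nothing to compare against; your direct substitution is the natural argument, and your remark that $X\p\T X\p$ cannot be simplified to the identity (since $X\p$ is only assumed to span the orthogonal complement, not to be orthonormal) is a worthwhile point to make explicit.
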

\subsection{The tangent direction}
Our aim is now to compute the term $\gradbeta f(X) = \ProjbetaX(\nablabeta f(X))$, in order to define the landing algorithm with respect to the $\beta$-metric. Remarkably, the normal space corresponding to the metric $\gbeta$ does not depend on the parameter $\beta$.
\begin{proposition}
For $\beta>0$ and $X\in \Rnpstar$, the normal space to $\St_{X\T X}$ with respect to the metric $g^\beta$ is given by
\[
\normalbeta_X\St_{X\T X}=\{ X(X\T X)^{-1}S\,|\, S\in\Sym(p) \}.
\] 
\end{proposition}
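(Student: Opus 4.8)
The plan is to establish the identity by a dimension count together with a single inclusion. Throughout, write $G := X\T X$, which is symmetric positive definite because $X\in\Rnpstar$, and set $P := XG\inv X\T$, the Frobenius-orthogonal projector onto $\range(X)$; thus $P\T=P$, $P^2=P$, and every matrix of the form $XG\inv S$ is fixed by $P$.

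First I would observe that the right-hand side is a linear subspace of $\Rnp$ of dimension $\dim\Sym(p)=p(p+1)/2$: the map $S\mapsto XG\inv S$ is linear and injective, since $X$ has full column rank and hence $XG\inv S=0$ forces $G\inv S=0$, i.e.\ $S=0$. On the other hand, because $g^\beta$ is a Riemannian metric, $\rmT_X\Rnpstar\simeq\Rnp$ splits as the $g^\beta_X$-orthogonal direct sum of $\rmT_X\St_{X\T X}$ and $\normalbeta_X\St_{X\T X}$; combining this with the dimension $np-p(p+1)/2$ of the tangent space established above yields $\dim\normalbeta_X\St_{X\T X}=p(p+1)/2$. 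It therefore suffices to prove the inclusion $\{XG\inv S:S\in\Sym(p)\}\subseteq\normalbeta_X\St_{X\T X}$.

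For the inclusion, fix $U=XG\inv S$ with $S\in\Sym(p)$ and let $\xi\in\rmT_X\St_{X\T X}$ be arbitrary. Using formula~\eqref{eq:beta_metric} for $g^\beta_X$ together with $PU=U$ and $P\T=P$, the $\beta$-dependent term cancels and
\[
g^\beta_X(U,\xi)=\langle U,\,(\In-(1-\beta)P)\,\xi\,G\inv\rangle=\beta\,\langle U,\,\xi G\inv\rangle=\beta\,\trace\!\bigl((G\inv S G\inv)\,(X\T\xi)\bigr).
\]
To conclude I would note that $G\inv S G\inv$ is symmetric while $X\T\xi$ is skew-symmetric --- this is precisely the condition $\xi\T X+X\T\xi=0$ defining $\rmT_X\St_{X\T X}$ in~\eqref{eq:tgtspace1} --- so the trace of their product vanishes. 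Hence $g^\beta_X(U,\xi)=0$ for every $\xi\in\rmT_X\St_{X\T X}$, that is $U\in\normalbeta_X\St_{X\T X}$, which finishes the inclusion and the proof.

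I do not anticipate a real obstacle: the computation is short once $P=XG\inv X\T$ is identified as the object responsible for killing the $\beta$-dependent term, and the only conceptual point is that the dimension of the normal space follows from nondegeneracy of $g^\beta_X$ (valid since it is a Riemannian metric). An alternative, equally short route replaces the inclusion argument by a direct computation based on the parametrization $\rmT_X\St_{X\T X}=\{WX:W\in\Skew(n)\}$ from~\eqref{eq:3form_tangent}, where $X\T W X\in\Skew(p)$ plays the role of $X\T\xi$ in the trace cancellation.
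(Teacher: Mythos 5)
Your proof is correct, but it follows a genuinely different route from the paper's. The paper computes the normal space from scratch: it writes a generic $\eta = XA + X\p B$, expands $\gbeta_X(\eta,\xi)$ for tangent vectors $\xi = X(X\T X)\inv\Omega_\xi + X\p B_\xi$ using the coordinate formula for the metric in the basis $(X,X\p)$, and solves the resulting orthogonality conditions to obtain $A=(X\T X)\inv S$ with $S\in\Sym(p)$ and $B=0$, which yields both inclusions at once. You instead prove only the inclusion $\{X(X\T X)\inv S\,|\,S\in\Sym(p)\}\subseteq\normalbeta_X\StX$ by a direct trace computation with the defining formula \eqref{eq:beta_metric} --- the key observation being that the projector $P=X(X\T X)\inv X\T$ fixes $X(X\T X)\inv S$, so the $\beta$-dependent factor collapses to $\beta$, and the remaining quantity $\beta\,\trace\bigl((X\T X)\inv S(X\T X)\inv X\T\xi\bigr)$ vanishes as the trace of a symmetric times a skew-symmetric matrix --- and you then close the argument with a dimension count: nondegeneracy of $\gbeta_X$ gives $\dim\normalbeta_X\StX = np-\dim\Trm_X\StX = p(p+1)/2$, matching the dimension of your candidate subspace (the map $S\mapsto X(X\T X)\inv S$ being injective since $X$ has full column rank). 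Your route avoids introducing the complement basis $X\p$ and the auxiliary coordinate expression of the metric, at the cost of invoking the tangent-space dimension from the first proposition and the positive-definiteness of the extended metric (both available in the paper); the paper's route is more self-contained in that it derives the answer rather than verifying a guessed candidate. Both arguments are complete.
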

\begin{proof}
	Let $\eta = XA + X\p B \in \normalbeta_X \StX$. By definition, we have $\gbeta(\eta,\xi) = 0$ for all $\xi \in \Trm_X \StX$, that is, all vectors of the form  \[\xi= X(X\T X)\inv \Omega_\xi + X\p B_\xi \] for some $\Omega_\xi \in \Skew(p)$, $B_\xi \in \Rnpp$. We find
	\begin{align}
		\gbeta(\eta,\xi) &= \beta \Tr \left[ A\T X\T X (X\T X)\inv \Omega_\xi (X\T X)\inv\right] + \Tr \left[ B\T X\p\T X\p B_\xi (X\T X)\inv \right]\\
		& = \beta \Tr \left[ A\T \Omega_\xi (X\T X)\inv\right] + \Tr \left[ B\T X\p\T X\p B_\xi (X\T X)\inv \right]=0
	\end{align}
	for all $\Omega_\xi \in \Skew(p)$ and $B_\xi \in \Rnpp$. Let $B_\xi=0$, we have 
	\begin{align}
		0 &= \Tr \left[ A\T \Omega_\xi (X\T X)\inv\right]\\
		&= \inner{A (X\T X)\inv}{\Omega_\xi}
	\end{align}
	for all $\Omega_\xi \in \Skew(p)$. This holds if and only if $A (X\T X)\inv \in \Sym(p)$, which is equivalent to $A =(X\T X)\inv S$ where $S\in \Sym(p)$. Using $\Omega_\xi =0$, we find that $B=0$. Thus, the normal vectors are the vectors of the form $X (X\T X)\inv S$ with $S \in \Sym(p)$.
\end{proof}

\begin{proposition}
    The orthogonal projection onto the tangent space $T_X\StX$ with respect to
    the metric $\gbeta$ is given for $Z\in\R^{n\times p}$ by    
    \begin{equation}\label{eq:proj_beta}
    \ProjbetaX\left(Z\right)=X(X\T X)^{-1}\skew(X\T Z) + (\In-X(X\T X)^{-1}X\T )Z.
    \end{equation}
\end{proposition}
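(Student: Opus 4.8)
The plan is to verify the two defining properties of the metric projection rather than to derive the formula from scratch. By~\eqref{eq:projection}, $\ProjbetaX(Z)$ is characterized as the unique $P\in\Trm_X\StX$ such that $Z-P$ is $\gbeta$-orthogonal to every tangent vector, i.e., $Z-P\in\normalbeta_X\StX$. So it suffices to show that the candidate $P$ given by the right-hand side of~\eqref{eq:proj_beta} lies in $\Trm_X\StX$ and that $Z-P$ lies in $\normalbeta_X\StX$. Uniqueness is then automatic: since $\gbeta$ is positive definite we have $\Trm_X\StX\cap\normalbeta_X\StX=\{0\}$ (a vector that is both tangent and normal pairs with itself to zero), and a dimension count gives $\dim\Trm_X\StX=np-p(p+1)/2$ from the first proposition while $\dim\normalbeta_X\StX=p(p+1)/2$ because $S\mapsto X(X\T X)\inv S$ is injective on $\Sym(p)$; hence $\Rnp=\Trm_X\StX\oplus\normalbeta_X\StX$ and the tangential component of $Z$ is unique.

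First I would check $P\in\Trm_X\StX$ via the characterization~\eqref{eq:tgtspace1}. Left-multiplying $P$ by $X\T$ and using $X\T X(X\T X)\inv=\Ip$ together with $X\T(\In-X(X\T X)\inv X\T)=0$, the computation collapses to $X\T P=\skew(X\T Z)$, which is skew-symmetric. Therefore $P\T X+X\T P=(X\T P)\T+X\T P=0$, so $P\in\Trm_X\StX$.

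Next I would compute $Z-P$. Substituting the formula for $P$, the two occurrences of $Z$ combine and one is left with $Z-P=X(X\T X)\inv\bigl(X\T Z-\skew(X\T Z)\bigr)=X(X\T X)\inv\sym(X\T Z)$. Since $\sym(X\T Z)\in\Sym(p)$, this has exactly the form $X(X\T X)\inv S$ with $S\in\Sym(p)$, so $Z-P\in\normalbeta_X\StX$ by the preceding proposition. Combined with the direct-sum decomposition above, this identifies $P$ as $\ProjbetaX(Z)$.

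I do not expect a genuine obstacle here; the only subtlety is invoking the direct-sum decomposition, which is what upgrades ``$Z-P$ is $\gbeta$-orthogonal to the tangent space'' to ``$P$ is \emph{the} $\gbeta$-orthogonal projection.'' If one prefers to avoid this, the decomposition step can be replaced by a direct verification that $\gbeta(\xi,Z-P)=0$ for all $\xi\in\Trm_X\StX$: using $Z-P=X(X\T X)\inv\sym(X\T Z)$ and the formula for $\gbeta$, this reduces—exactly as in the proof of the normal-space proposition—to pairing a symmetric matrix against the skew-symmetric part of a tangent vector, which vanishes.
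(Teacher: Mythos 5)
Your proof is correct and follows essentially the same route as the paper's: both identify $\ProjbetaX(Z)$ by splitting $Z$ into a tangent part and a part in $\normalbeta_X\StX$, the only difference being that the paper derives the split by solving for the components in the $\{X,X\p\}$ parametrization while you verify the stated formula directly via the criterion $X\T P\in\Skew(p)$ and the identity $Z-P=X(X\T X)\inv\sym(X\T Z)$. Your explicit justification of uniqueness through the direct-sum decomposition is a welcome addition that the paper leaves implicit.
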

\begin{proof}
	Let $Z = XA + X\p B \in \Rnp$. We want to find $\eta \in \normalbeta_X\StX$ such that $Z-\eta \in \Trm_X \StX$, which implies $\ProjbetaX(Z) = Z-\eta$ in view of~\eqref{eq:projection}. Let 
	\[\ProjbetaX(Z)= X (X\T X)\inv \Omega_\xi + X\p B_\xi\] 
	for some $\Omega_\xi \in \Skew(p)$, $B_\xi \in \Rnpp$
	and $$\eta = X (X\T X)\inv S$$ with $S\in \Sym(p)$. We impose
	\begin{align}
		XA + X\p B - X (X\T X)\inv S = X (X\T X)\inv \Omega_\xi + X\p B_\xi.
	\end{align}
	This is satisfied for $B = B_\xi$ and $S + \Omega_\xi = X\T X A$, which implies $S= \sym(X\T XA)$ and $\Omega_\xi = \skew(X\T XA)$. In conclusion, 
	\begin{align}
		\ProjbetaX(Z) = X(X\T X)^{-1}\skew(X\T XA) + X\p B.
	\end{align}
	The result follows from $X\T Z = X\T XA$ and $X\p B = (\In-X(X\T X)^{-1}X\T)Z$.
	\end{proof}
\begin{proposition}
	The unconstrained Riemannian gradient in the metric $\gbeta$ is given by
	\begin{align}\label{eq:nabla_beta}
		\nablabeta f(X) &= \left( \In + \frac{(1-\beta)}{\beta} X (X\T X)\inv X\T \right) \nablaE f(X) X\T X,
	\end{align}
	where $\nablaE f$ is the unconstrained Riemannian gradient in the Euclidean metric. 
	\end{proposition}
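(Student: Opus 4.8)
The plan is to work directly from the variational characterization~\eqref{eq:unconstrained_gradient}: $\nablabeta f(X)$ is the unique $\zeta\in\rmT_X\Rnpstar\simeq\Rnp$ with $\D f(X)[\xi]=\gbeta_X(\xi,\zeta)$ for all $\xi\in\Rnp$. First I would substitute the defining identity of the Euclidean gradient, $\D f(X)[\xi]=\inner{\nablaE f(X)}{\xi}$, together with the closed form~\eqref{eq:beta_metric} of the $\beta$-metric, turning the condition into
\[
\inner{\nablaE f(X)}{\xi}=\inner{\xi}{(\In-(1-\beta)X(X\T X)\inv X\T)\,\zeta\,(X\T X)\inv}\qquad\text{for all }\xi\in\Rnp .
\]
Writing $P:=X(X\T X)\inv X\T$ and $M:=\In-(1-\beta)P$, and using that $P$, $(X\T X)\inv$, and hence $M$, are symmetric, the right-hand side rewrites as the Frobenius inner product of $\xi$ with the fixed matrix $M\zeta(X\T X)\inv$; by nondegeneracy of the Frobenius inner product on $\Rnp$ this forces
\[
\nablaE f(X)=M\,\zeta\,(X\T X)\inv ,\qquad\text{equivalently}\qquad \nablaE f(X)\,X\T X=M\,\zeta .
\]

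Next I would invert $M=\In-(1-\beta)P$. Since $P$ is idempotent, I look for an inverse of the form $\In+cP$; expanding $(\In-(1-\beta)P)(\In+cP)=\In+(c\beta-(1-\beta))P$ and setting the coefficient of $P$ to zero gives $c=(1-\beta)/\beta$, which is legitimate because $\beta>0$. Hence $M\inv=\In+\tfrac{1-\beta}{\beta}X(X\T X)\inv X\T$, and
\[
\nablabeta f(X)=\zeta=M\inv\,\nablaE f(X)\,X\T X=\Big(\In+\tfrac{(1-\beta)}{\beta}X(X\T X)\inv X\T\Big)\nablaE f(X)\,X\T X ,
\]
which is exactly~\eqref{eq:nabla_beta}. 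Uniqueness is inherited from the nondegeneracy used above, and $\zeta\in\Rnp\simeq\rmT_X\Rnpstar$ is clear by construction.

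The only real obstacle is the trace bookkeeping that lets one cancel the arbitrary $\xi$, i.e.\ rewriting $\gbeta_X(\xi,\zeta)$ as $\inner{\,\cdot\,}{\xi}$ with the transposes and cyclic permutations handled correctly; once this is done, cancelling $\xi$ and inverting $M$ are routine. As a cross-check one could instead derive the formula from the definition of $\gbeta$ as the pullback through $\rmPhi_{X\T X}$ together with a known expression for the unconstrained $\beta$-gradient on $\Stnp$, but the direct route above is the shortest.
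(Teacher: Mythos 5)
Your proposal is correct and follows essentially the same route as the paper: identify $\nablaE f(X)=\bigl(\In-(1-\beta)X(X\T X)\inv X\T\bigr)\nablabeta f(X)(X\T X)\inv$ from the defining variational identity and the nondegeneracy of the Frobenius inner product, then invert the factor $\In-(1-\beta)X(X\T X)\inv X\T$. The only difference is cosmetic: you derive the inverse via the idempotent ansatz $\In+cP$, whereas the paper simply states the inverse and verifies the product equals $\In$.
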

	\begin{proof}
		By definition, for all $X\in \Rnpstar$ and $Z\in \Rnp$,
		\begin{align}
			\inner{\nablaE f(X)}{Z} &= \D f(X)[Z]  \\
			&= \gbeta(\nablabeta f(X),Z) \\
			 &= \inner{\left(\In-(1-\beta)X(X\transpose X)\inv X\transpose\right)\nablabeta f(X) (X\transpose X)\inv}{Z}.
		\end{align}
By identification, this gives,
\begin{equation}
	\nablaE f(X) = \left(\In-(1-\beta)X(X\transpose X)\inv X\transpose\right)\nablabeta f(X) (X\transpose X)\inv.
	\end{equation}
To obtain~\eqref{eq:nabla_beta}, it is readily verified that 
\[\left(\In-(1-\beta)X(X\transpose X)\inv X\transpose\right) \left(\In +\frac{(1-\beta)}{\beta}X(X\transpose X)\inv X\transpose\right) = \In. \qedhere\]
	\end{proof}
We conclude with the expression for the constrained Riemannian gradient on the layer manifold $ \StX$ for the metric $\gbeta$. The case $\beta = \frac12$ recovers~\cite[Prop.~4]{gao_optimization_2022}.
\begin{proposition}
For $\beta>0$, the constrained Riemannian gradient at $X\in \Rnpstar$ with respect to the metric $\gbeta$ is given by
\begin{align}
	\grad_{\beta} f(X) &= \ProjbetaX (\nablabeta f(X)) \label{eq:grad_beta}\\
	&= \nablaE f(X) X\T X - \frac{1}{2\beta} X \nablaE f(X)\T X \nonumber \\
	&~~~+ \left(\frac{1}{2\beta} -1\right)X (X\T X)\inv X\T \nablaE f(X) X\T X.
\end{align} 
For $\beta=\frac{1}{2}$, this simplifies to 
\begin{align}
\gradC f(X) &= 	\nablaE f(X) X\T X -  X \nablaE f(X)\T X \\
&= 2 \skew\left( \nablaE f(X) X\T\right)X.
\end{align}
\end{proposition}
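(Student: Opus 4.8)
The plan is to substitute the two closed-form expressions already obtained—the projection formula \eqref{eq:proj_beta} and the unconstrained Riemannian gradient \eqref{eq:nabla_beta}—into the identity $\gradbeta f(X) = \ProjbetaX(\nablabeta f(X))$ and simplify. To keep the algebra compact I would write $G := \nablaE f(X)$ and $M := X\T X \in \Sym(p)$, which is invertible because $X\in\Rnpstar$, and set $P := X M\inv X\T$, the Euclidean orthogonal projector onto $\range(X)$, so that $P\T = P$ and $P^2 = P$. The only identities needed are $X\T X M\inv = \Ip$ (hence $X\T P = X\T$ and $X M\inv M = X$), the idempotency $P^2 = P$, and $\skew(A) = \tfrac12(A - A\T)$.

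First I would evaluate $X\T Z$ for $Z := \nablabeta f(X) = \bigl(\In + \tfrac{1-\beta}{\beta} P\bigr) G M$. Using $X\T P = X\T$ gives $X\T Z = X\T G M + \tfrac{1-\beta}{\beta} X\T G M = \tfrac1\beta X\T G M$, so $\skew(X\T Z) = \tfrac1\beta\skew(X\T G M)$. Inserting this into the first term of \eqref{eq:proj_beta} and expanding the $\skew$ with $X M\inv M = X$ yields $X M\inv\skew(X\T Z) = \tfrac{1}{2\beta} P G M - \tfrac{1}{2\beta} X G\T X$, which is where the term $-\tfrac{1}{2\beta} X \nablaE f(X)\T X$ comes from.

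Next I would compute the second term $(\In - P)Z$ of \eqref{eq:proj_beta}. Expanding $Z$ and using $P^2 = P$, the two contributions proportional to $\tfrac{1-\beta}{\beta}$ cancel, leaving $(\In - P)Z = G M - P G M$. Adding the two pieces and collecting the coefficient of $P G M = X(X\T X)\inv X\T \nablaE f(X) X\T X$ gives $\gradbeta f(X) = G M - \tfrac{1}{2\beta} X G\T X + \bigl(\tfrac{1}{2\beta} - 1\bigr) P G M$, which is exactly the claimed formula. Finally, putting $\beta = \tfrac12$ kills the last term, and $G M - X G\T X = (G X\T - X G\T)X = 2\skew\bigl(\nablaE f(X) X\T\bigr)X$ gives the stated simplification.

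I do not expect a genuine obstacle: the statement follows by direct substitution, and the only thing requiring care is the bookkeeping of the projector identities and the verification that the $\tfrac{1-\beta}{\beta}$-terms in $(\In - P)Z$ indeed cancel—this cancellation is what makes the final expression depend on $\beta$ only through the single scalar $\tfrac{1}{2\beta}$.
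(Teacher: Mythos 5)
Your proposal is correct and follows essentially the same route as the paper: both substitute the unconstrained gradient formula \eqref{eq:nabla_beta} into the projection \eqref{eq:proj_beta} and simplify, relying on the same two cancellations (the factor $\tfrac1\beta$ arising from $X\T P = X\T$, and the vanishing of the $\tfrac{1-\beta}{\beta}$-term under $\In - P$). Your projector notation merely compacts the bookkeeping; the argument is identical in substance.
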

\begin{proof}
	To apply~\eqref{eq:grad_beta} directly, we combine~\eqref{eq:nabla_beta} and~\eqref{eq:proj_beta}. We find
	\begin{align*}
			\gradbeta f(X) &= X(X\T X)^{-1}\skew\left(X\T \nablaE f(X) X\T X\right)\\
						&~~+ \frac{(1-\beta)}{\beta} X(X\T X)^{-1}\skew\left( X\T \nablaE f(X) X\T X\right) \\
						&~~+ \left( \In - X (X\T X)\inv X\T \right) \nablaE f(X) X\T X\\
						&= \frac{1}{\beta}X(X\T X)^{-1}\skew\left(X\T \nablaE f(X) X\T X\right)\\
						&~~+ \nablaE f(X) X\T X - X (X\T X)\inv X\T \nablaE f(X) X\T X\\
						&= \frac{1}{2\beta}X(X\T X)^{-1} \left(X\T \nablaE f(X) X\T X\right)\\
						&~~ - \frac{1}{2\beta}X(X\T X)^{-1} \left(X\T \nablaE f(X) X\T X\right)\T \\
						&~~ + \nablaE f(X) X\T X - X (X\T X)\inv X\T \nablaE f(X) X\T X\\
						&= \frac{1}{2\beta}X(X\T X)^{-1} X\T \nablaE f(X) X\T X - \frac{1}{2\beta}X  \nablaE f(X)\T X \\ \nonumber
						&~~ + \nablaE f(X) X\T X - X (X\T X)\inv X\T \nablaE f(X) X\T X. 
\qedhere
	\end{align*}
\end{proof}

In conclusion, we have obtained a constrained Riemannian gradient $\gradbeta f(X)$ which is easily computable from $\nablaE f(X)$, the unconstrained Riemannian gradient in the Euclidean metric.

\begin{credits}
\subsubsection{\ackname} This work was supported by the Fonds de la Recherche Scientifique-FNRS under Grant no T.0001.23. F. Feppon was supported by the Flanders Research Foundation (FWO) under Grant G001824N.

\subsubsection{\discintname} The authors have no competing interests to declare that are
relevant to the content of this article.
\end{credits}
%
%
%
 \bibliographystyle{splncs04}
 \bibliography{references.bib}

\end{document}